\documentclass[a4paper,11pt,twoside]{article}


\usepackage[utf8]{inputenc}
\usepackage[T1]{fontenc}
\usepackage[english]{babel}


\usepackage{charter}

\frenchspacing
\usepackage{indentfirst}
\usepackage{xcolor}
\usepackage{verbatim}

\usepackage{hyperref}

\usepackage{pifont}
\newcommand\dd{\text{d}}

\usepackage[top=3.cm, bottom=4.0cm, left=2.2cm, right=2.2cm]{geometry}
\usepackage{amsmath}
\usepackage{amsthm}	
\usepackage{amsfonts}	
\usepackage{amssymb	
           ,bbm
            ,units 
           }
\usepackage{enumerate}
\usepackage[nobysame
           ,alphabetic
           ,numeric
           ,initials
           ]{amsrefs}




\usepackage{
            ulem		
           ,soul		
} \normalem

\numberwithin{equation}{section}
\numberwithin{figure}{section}
 \usepackage[nodayofweek]{datetime}

\renewcommand*{\thefootnote}{\fnsymbol{footnote}}

\title{On the relation between Stratonovich and It\^o integrals with functional integrands of conditional measure flows}
\author{
 	\normalsize Gon\c calo dos Reis\footnote{G. dos Reis acknowledges support from the \emph{Funda{\c c}$\tilde{\text{a}}$o para a Ci$\hat{e}$ncia e a Tecnologia} (Portuguese Foundation for Science and Technology) through the project [UIDB/00297/2020] (Centro de Matem\'atica e Aplica\c c$\tilde{\text{o}}$es CMA/FCT/UNL).}
 	\\[8pt]
         \small  University of Edinburgh\\ 
         \small  School of Mathematics \\
         \small  Edinburgh, EH9 3FD, UK\\  
         \small  and \\
	\small  Centro de Matem\'atica e Aplica\c c$\tilde{\text{o}}$es \\
	\small (CMA), FCT, UNL, Portugal \\
        \small  G.dosReis@ed.ac.uk
 \and
\normalsize Vadim Platonov\\[8pt] 
         \small  University of Edinburgh\\ 
         \small  School of Mathematics \\
         \small  Edinburgh, EH9 3FD, UK\\    
        \small    \\
		\small  \\
          \\
        \small V.d.Platonov@sms.ed.ac.uk
}

\date{ \currenttime, \ddmmyyyydate\today\qquad{(File: \tt \jobname.tex})}


\theoremstyle{plain}
\newtheorem{theorem}{Theorem}[section]

\newtheorem{proposition}[theorem]{Proposition}

\newtheorem{definition}[theorem]{Definition}

\newtheorem{remark}[theorem]{Remark}

\newtheorem{assumption}[theorem]{Assumption}


\newcommand{\bE}{\mathbb{E}}
\newcommand{\bF}{\mathbb{F}}

\newcommand{\bN}{\mathbb{N}}
\newcommand{\bP}{\mathbb{P}}

\newcommand{\bR}{\mathbb{R}}


\newcommand{\cB}{\mathcal{B}}
\newcommand{\cC}{\mathcal{C}}

\newcommand{\cF}{\mathcal{F}}

\newcommand{\cP}{\mathcal{P}}



\newcommand{\trace}{\textrm{Trace}}
\newcommand{\Supp}{\textrm{Supp}}
\newcommand{\Law}{\textrm{Law}}


\definecolor{darkgreen}{rgb}{0,0.35,0}

\newcommand{\1}{\mathbbm{1}}



\hyphenation{nu-me-ri-cal te-cno-lo-gia a-ppro-xi-ma-ting}
\begin{document}

\selectlanguage{english}

\maketitle
\renewcommand*{\thefootnote}{\arabic{footnote}}

\vspace{-1cm}
\begin{abstract} 
In this small note we explicit the relation between It\^o and Stratonovich integrals when conditional measure flow components are present in the integrands. The `correction' term involves Lions-type measure derivatives and clarifies which cross-correlations need to be taken into account. We cast the framework in relation to SDEs of mean-field type depending on conditional flows of measure. The result being trivial under full flows of measure.
\end{abstract}
{\bf Keywords:} 
Stratonovich integral, It\^o integral, Stochastic Differential Equations, conditional measure flows, Lions derivative, mean-field
\vspace{0.3cm}

\noindent
{\bf 2010 AMS subject classifications:}\\
Primary: 60H05 
Secondary: 60H10, 60H15

%

%
%
%
\footnotesize
\setcounter{tocdepth}{2}
\tableofcontents
\normalsize


\newpage
\section{Introduction}

It\^o and Stratonovich integrals are ubiquitous in stochastic analysis \cite{KaratzasShreve1991BMandStochCalc,kunita1997stochastic,kloedenplaten1992NumericalSDEs} and each approach with its own advantages and disadvantages depending on usage \cite{vankampen1981itovsStrat,holm2020itovsStrat}. For instance, while the Stratonovich form has a simple chain rule, its drawback lies in the fact that expectations of Stratonovich integrals are more difficult to control (their expectation is non-zero in strict opposition to the It\^o integral). Examples of this \textit{It\^o Vs.~Stratonovich} dichotomy can be found in the motivation for numerical methods \cite{Foster2020polynomialapproximation}, regularization by noise \cite{Maurelli2020Nonexplosion,Kupferman2004ItoStrat} or the recent development of probabilistic rough paths \cite{delarue2021probabilistic}: each approach leads to different forms of iterated integrals (in Taylor expansions) and, depending on the context, one is more convenient to use than the other. 

Recall the context of Stochastic Differential Equations (SDE) with $W$ denoting a standard Brownian motion and, for sake of argument, let $b$ and $\sigma$ be `nice' maps. Denote the Stratonovich integral with its usual symbol  $\circ \dd W$ with  $\dd W$ referring to the usual It\^o integral.  

A key result in stochastic analysis is the conversion rule between the  It\^o and Stratonovich integrals in the context of SDEs, here in one dimension, namely 
\begin{align*}
\mathrm{d} X_t=b(t,X_t) \mathrm{d} t+\sigma(t,X_t) \circ \mathrm{d} W_t 
\ \ \Leftrightarrow \ \ 
\mathrm{d} X_t=\big(\,b(t,X_t)+\frac{1}{2} \sigma(t,X_t) \partial_{x} \sigma(t,X_t)\,\big) \mathrm{d} s+\sigma(t,X_t) \mathrm{d} W_t,
\end{align*}
and, conversely, that
\begin{align*}
\mathrm{d} X_t=b(t,X_t) \mathrm{d} t+\sigma(t,X_t) \mathrm{d} W_t 
\ \ \Leftrightarrow \ \ 
\mathrm{d} X_t=\big(\,b(t,X_t)-\frac{1}{2} \sigma(t,X_t) \partial_{x} \sigma(t,X_t)\,\big) \mathrm{d} t+\sigma(t,X_t) \circ \mathrm{d} W_t.
\end{align*}

The moral behind the correction term $\sigma \partial_x \sigma$ is the same as in the It\^o-Wentzell formula \cite{kunita1997stochastic,Platonov2019ito}: it emerges from the  cross variation  between the randomness in the integrand $\sigma(t,X_t)$ and the driving noise when one moves from the Stratonovich mid-point integration rule to the It\^o left-point integration rule (and vice-versa). If $\sigma$ has no random elements (or existing, they are independent of the driving noise) then the It\^o and Stratonovich integrals coincide. 
\smallskip

The goal of this work is to clarify the relation between It\^o and Stratonovich integrals when conditional measure-flow components, in the style of mean-field games, are present in the integrands \cite{Platonov2019ito,CarmonaDelarue2017book2}. Namely, when $\sigma$ is not just a function of time and space but also a measure functional and the measure input is a \textit{conditional measure flow} -- if the measure is not random then the problem is trivial as explained above. Our contribution is motivated by developments in mean-field games with common noise \cite{CarmonaDelarue2017book2,delarue2021probabilistic,cardaliaguet2015master}, mean-field SDEs and conditional propagation of chaos \cite{Erny2021ConditoinalPoC}, open problems in regularization by noise (for conditional measures flows) or numerical methods among others. The contributions within the framework of conditional measure flows is slowly developing and these results are a step in supporting this literature. 

In terms of the results themselves there are two comments to be made. Firstly, the `correction' term involves a Lions-type measure derivative \cite{CarmonaDelarue2017book1,CarmonaDelarue2017book2} in a way that is intuitively comparable to the $\sigma \partial_x \sigma$ term appearing above. The result is intuitive but critically clarifies that no cross noise variation appears between the (possible) multiple sources of randomness in the integrand. 
The second comment relates to the employed methodology: we follow is the ``limit of particles systems'' one. This approach was initiated in \cite{chassagneux2014classical,CarmonaDelarue2017book1,CarmonaDelarue2017book2} and employed in \cite{Platonov2019ito} to prove an It\^o-Wentzell-Lions formula. Our result follows the arguments in the latter. Critically, we point the reader to the introduction of \cite{Platonov2019ito} for a review on alternative proof methodologies \cite{buckdahn2017mean,cardaliaguet2015master,cavallazzi2021krylov,guo2020s,talbi2021dynamic}. On the latter two, \cite{guo2020s,talbi2021dynamic}, the approach for conditional flows is not developed and this is the critical case for this manuscript. Nonetheless, we believe their approach, under the assumption that the diffusion map $\sigma$ is deterministic, would deliver the same results under the same assumptions. For simplicity we follow \cite{Platonov2019ito}.

\textbf{Organisation of the paper.} In Section 2 we set notation and review a few concepts necessary for the main constructions. The main result is presented in Section \ref{sec:conditionalflow}. 
\medskip

\textbf{Acknowledgements.} The authors would like to thank William Salkeld (Universit\'e de Nice Sophia-Antipolis, FR) for the helpful discussions.

%
%
%

%
%
%
\section{Notation and auxiliary results}
\label{sec:two}

\subsection{Notation and Spaces}

Let $\bN$ be the set of natural numbers starting at $1$, $\bR$ denotes the real numbers. For  collections of vectors in $\{x^l\}_{l}\in \bR^d$, let the upper index $l$ denotes the distinct vectors, whereas the lower index the vector components, i.e. $x^l=(x^l_1,\cdots,x^l_d)\in \bR^d$ namely $x^l_j$ denotes the $j$-th component of $l$-th vector. For $x,y \in \bR^d$ denote the scalar product by $x \cdot y=\sum_{j=1}^d x_j y_j$; and $|x|=(\sum_{j=1}^d x_j^2)^{1/2}$ the usual Euclidean distance; and $x \otimes y$ denotes the tensor product of vectors $x,y \in \bR^d$. Let $\1_A$ be the indicator function of set $A\subset \bR^d$. For a matrix $A \in \bR^{d\times m}$ we denote by $A^\intercal$ its transpose and its Frobenius norm by $|A|=\trace\{A A^\intercal\}^{1/2}$. Let $I_d:\bR^d\to \bR^d$ be the identity map. 

We denote by $\cC(A,B)$ for $A,B \subseteq \bR^d$, $d\in \bN$, the space of continuous functions $f:A\to B$.  In terms of derivative operators and differentiable functions, $\partial_t$ denotes the partial differential in the time parameter $t \in [0,T]$; $\partial_y$ and $\partial_{yy}^2$ denote the gradient and the Hessian operator in $y\in\bR^d$ respectively. 

We say that the function is locally bounded, when its restriction to the compact set is bounded.

\subsection*{Spaces}

We introduce over $\bR^d$ the space of probability measures $\cP(\bR^d)$ and its subset $\cP_2(\bR^d)$ of those with finite second moment. The space $\cP_2(\bR^d)$ is Polish under the Wasserstein distance
\begin{align*}
W_2(\mu,\nu) = \inf_{\pi\in\Pi(\mu,\nu)} \Big(\int_{\bR^d\times \bR^d} |x-y|^2\pi(dx,dy)\Big)^\frac12, \quad \mu,\nu\in \cP_2(\bR^d) ,
\end{align*}   
where $\Pi(\mu,\nu)$ is the set of couplings for $\mu$ and $\nu$ such that $\pi\in\Pi(\mu,\nu)$ is a probability measure on $\bR^d\times \bR^d$ such that $\pi(\cdot\times \bR^d)=\mu$ and $\pi(\bR^d \times \cdot)=\nu$. Let $\Supp(\mu)$ denote the support of $\mu \in \cP(\bR^d)$.

Throughout set some $0<T<+\infty$ and we work the finite time interval $[0,T]$. We consider $(\Omega^0,\cF^0, \bF^0 = (\cF^0_t)_{t \in [0,T]},\bP^0)$ and $(\Omega^1,\cF^1, \bF^1 = (\cF^1_t)_{t \in [0,T]},\bP^1)$ atomless Polish probability spaces to be the respective completions of $(\Omega^0,\bF^0,\bP^0)$ and $(\Omega^1,\bF^1,\bP^1)$ carrying a respective $m$-dimensional Brownian motions $W^0 = (W_t^0)_{t \in [0,T]}$ and $W^1 = (W_t^1)_{t \in [0,T]}$ generating the probability space's filtration, augmented by all $\bP^0$- and $\bP^1$-null sets respectively. We augment $(\Omega^0,\cF^0, \bF^0 = (\cF^0_t)_{t \in [0,T]},\bP^0)$ with a sufficiently rich sub $\sigma$-algebra $\cF^0_0$ independent of $W^0$ and $W^1$. We denote by $(\Omega,\bF, \bP)$ the completion of the product space $(\Omega^0 \times \Omega^1,\bF^0 \otimes \bF^1, \bP^0 \otimes\bP^1)$ equipped with the filtration $\bF$ obtained by augmenting the product filtration $\bF^0 \otimes \bF^1$ in a right-continuous way and by completing it. We let $\bE^0$ and $\bE^1$ taking the expectation on the first and second space respectively. 
We adopt the following convention, that for $d$-dimensional random vector $Y=(Y_1,\cdots,Y_d)$ we understand denote $\bE[Y]$ by the $d$-dimensional vector $(\bE[Y_1],\cdots,\bE[Y_d])$.

We define $L^2(\Omega,\cF_0,\bP,\bR^d)$ as the space of $\cF_0$-measurable random variables $\xi:\Omega\to \bR^d$ that are square integrable $\bE^\bP[|\xi|^2]<\infty$. Given two processes $(X_t)_{t \in [0,T]}$ and $(Y_t)_{t \in [0,T]}$ let $\langle X_t,Y_t \rangle$ denote their cross-variation up to time $t \in [0,T]$.

Lastly, for convenience we choose to work over $1$-, $d$- and $d\times m$-dimensional spaces. 
The generalisation to different dimensions is straightforward from our text.

\subsection{The Lions derivative}

\subsubsection{The Lions derivative and notational conventions}
\label{sec:LionsDerivative}

To consider the calculus for the mean-field setting one requires to build a suitable differentiation operator on the $2$-Wasserstein space. Among the several notions of differentiability of a functional $u$ defined over  $\cP_2(\bR^d)$ we follow the approach introduced by Lions in his lectures at Coll\`ege de France \cite{lions2007cours} and further developed in \cite{cardaliaguet2010notes}. A comprehensive presentation can be found in the joint monograph of Carmona and Delarue \cite{CarmonaDelarue2017book1},\cite{CarmonaDelarue2017book2}.

We consider a canonical lifting of the function $u:\cP_2(\bR^d) \to \bR$ to $\tilde{u}: L^2(\Omega,\cF,\bP;\bR^d) \ni Y \to \tilde u (Y) = u(Law(Y)) \in \bR$, where  $L^2(\Omega,\cF,\bP;\bR^d)$ is a space of square integrable random variables. We can say that $u$ is $L$-differentiable at $\mu$, if $\tilde u$ is Frech\`et differentiable (in $L^2$) at some $Y$, such that $\mu = \bP \circ Y^{(-1)}$. Denoting the gradient by $D\tilde u$ and using a Hilbert structure of the $L^2$ space, we can identify $D\tilde u$ as an element its dual, $L^2$ itself. It was shown in \cite{cardaliaguet2010notes} that $D\tilde u$ is a $\sigma(Y)$-measurable random variable and given by the function $Du(\mu, \cdot) : \bR^d \to \bR^d $, depending on the law of $Y$ and satisfying $Du (\mu, \cdot) \in L^2(\bR^d, \cB(\bR^d),\mu; \bR^d)$. Hereinafter the  $L$-derivative of $u$ at $\mu$ is the map $\partial_\mu u(\mu,\cdot): \bR^d \ni v \to \partial_\mu u(\mu,v) \in \bR^d$, satisfying $D\tilde u(Y) = \partial_\mu u(\mu,Y)$.
We always denote $\partial_\mu u$ as the version of the $L$-derivative that is continuous in the product topology of all components of $u$.

While working with a common noise setting, we reconstruct the space by adding the copy spaces, this procedure is well-described in the proof of Proposition \ref{prop:strat-to-ito-main-result}.

\subsubsection{The Empirical projection map}
We recall the concept of \emph{empirical projection map} given in \cite{chassagneux2014classical} which will be one of the main workhorses throughout our work.
\begin{definition}[Empirical projection of a map]
\label{def:Auxiliary-uN-for-EmpirialTrick}
Given $u: \cP_2(\bR^d) \to \bR$ and $N\in \bN$, define the empirical projection $u^N$ of $u$ via $u^N: (\bR^d)^N \to \bR$, such that \[u^N(y^1,\dots, y^N) := u \big(\bar{\mu}^N\big),
\quad \text{with}\quad
\bar{\mu}^N := \frac{1}{N}\sum\limits_{l=1}^N \delta_{y^l}
\quad\textrm{and}\quad 
y^l\in \bR^d ,~ l=1,\dots,N.
\]
\end{definition}
We recall \cite{CarmonaDelarue2017book1}*{Proposition 5.35} which relates the spatial derivative of $u^N$ with the $L$-derivative of $u$.
\begin{proposition}
\label{prop:DerivativeRelations-Space-2-Lions}
Let $u: \cP_2(\bR^d) \to \bR$ be \textit{Fully}-$\cC^2 (\cP_2(\bR^d))$, then, for any $N>1$, the empirical projection $u^N$ is $\cC^2$ on $(\bR^d)^N$ and for all $y^1,\cdots,y^N\in\bR^d$ we have the following differentiation rule
\begin{align*}
    \partial_{y^j}u^N(y^1, \dots, y^N) &= \frac{1}{N}\: \partial_\mu u\Big(\frac{1}{N}\sum_{l=1}^N \delta_{y^l},y^j\Big).
\end{align*}
\end{proposition}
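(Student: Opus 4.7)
The plan is to establish the differentiation rule by exploiting the canonical lifting described in Section \ref{sec:LionsDerivative}: if I can realize the empirical measure $\bar{\mu}^N$ as the law of a carefully chosen random variable $Y$, then perturbing the $j$-th spatial variable $y^j$ corresponds to an explicit Fréchet perturbation of $Y$ in $L^2$, and Lions' definition $D\tilde u(Y) = \partial_\mu u(\mu,Y)$ will deliver the formula directly.

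Concretely, I would first fix an atomless probability space and partition $\Omega$ into measurable sets $A_1,\dots,A_N$ with $\bP(A_l)=1/N$ for each $l$. Defining $Y := \sum_{l=1}^N y^l \mathbf{1}_{A_l}$ gives a random variable with $\Law(Y) = \bar{\mu}^N = \frac{1}{N}\sum_{l=1}^N \delta_{y^l}$, hence $u^N(y^1,\dots,y^N) = \tilde u(Y)$. To compute $\partial_{y^j} u^N$, I perturb only the $j$-th coordinate: for $h\in\bR^d$ and $\epsilon>0$, the random variable $Y^\epsilon := Y + \epsilon\, h\, \mathbf{1}_{A_j}$ has law $\frac{1}{N}\sum_{l\neq j}\delta_{y^l} + \frac{1}{N}\delta_{y^j+\epsilon h}$, so $u^N(y^1,\dots,y^j+\epsilon h,\dots,y^N) = \tilde u(Y^\epsilon)$. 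Since $u$ is Fully-$\cC^2(\cP_2(\bR^d))$, $\tilde u$ is Fréchet differentiable at $Y$, and
\begin{align*}
\frac{\dd}{\dd\epsilon}\Big|_{\epsilon=0} \tilde u(Y^\epsilon) = \bE\big[ D\tilde u(Y) \cdot h\,\mathbf{1}_{A_j}\big] = \bE\big[ \partial_\mu u(\bar{\mu}^N,Y)\cdot h\, \mathbf{1}_{A_j}\big].
\end{align*}
On $A_j$ one has $Y=y^j$, so the expectation reduces to $\frac{1}{N}\, \partial_\mu u(\bar{\mu}^N,y^j)\cdot h$. Since $h$ is arbitrary, this yields the stated identity.

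For the $\cC^2$ regularity and continuity of the derivative in $(y^1,\dots,y^N)$, I would iterate the same lifting argument on the Fréchet second derivative $D^2\tilde u$, which Fully-$\cC^2$ regularity guarantees to be continuous in $L^2$. Perturbing independently in the $j$-th and $k$-th coordinates (via $h\,\mathbf{1}_{A_j}$ and $h'\,\mathbf{1}_{A_k}$) leads to a second-order expansion whose cross term, after evaluating on $A_j \cap A_k$, splits into the diagonal case $j=k$ (which is empty intersection unless one passes to the $\partial_v\partial_\mu u$ component, giving an additional $1/N$ factor) and the off-diagonal case $j\neq k$ (giving the mixed $\partial^2_\mu u$ term with a factor $1/N^2$). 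Continuity of these pointwise expressions in $(y^1,\dots,y^N)$ follows from the convention that $\partial_\mu u$ and $\partial^2_\mu u$ are taken to be jointly continuous in the product topology.

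The main subtlety is not the formal computation but ensuring that the perturbation $Y \mapsto Y + \epsilon h\, \mathbf{1}_{A_j}$ is admissible for the definition of $L$-differentiability, i.e., that the underlying $L^2$ space is rich enough to accommodate the partition $\{A_l\}$ and such variations. This is exactly why the atomless Polish assumption on $(\Omega,\cF,\bP)$ is imposed in the notation section, and it guarantees that the Fréchet derivative evaluated along these step-function directions recovers the $L$-derivative $\partial_\mu u(\mu,\cdot)$ at the prescribed points $y^j\in\Supp(\bar{\mu}^N)$.
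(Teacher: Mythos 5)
This proposition is not proved in the paper at all; it is recalled verbatim from \cite{CarmonaDelarue2017book1}*{Proposition 5.35}, and your argument --- realizing $\bar\mu^N$ as the law of the step random variable $Y=\sum_{l}y^l\1_{A_l}$ over an $N$-fold equipartition of an atomless space, perturbing a single cell, and reading off $\partial_{y^j}u^N(y^1,\dots,y^N)=\frac1N\,\partial_\mu u(\bar\mu^N,y^j)$ from the Fr\'echet differentiability of the lift --- is exactly the standard proof given in that reference, and your first-order computation is correct and complete (continuity of the resulting partial derivatives, hence $\cC^1$ regularity, follows as you say from joint continuity of $\partial_\mu u$ and continuity of $y\mapsto\bar\mu^N$ in $W_2$). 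The $\cC^2$ part is only sketched and one sentence is garbled ($A_j\cap A_k$ is empty precisely when $j\neq k$, and the diagonal second derivative also carries a $\frac{1}{N^2}\partial^2_\mu u(\bar\mu^N,y^j,y^j)$ term in addition to the $\frac1N\partial_v\partial_\mu u$ term), but the ingredients you invoke --- existence and joint continuity of $\partial_v\partial_\mu u$ and $\partial^2_\mu u$ guaranteed by the \emph{Fully}-$\cC^2$ assumption, iterated through the same lifting --- are the right ones and coincide with the cited proof.
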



%
%
%
%

\section{Stratonovich - It\^o equivalence under conditional flows of measures}
\label{sec:conditionalflow}

The setting discussed in this section is inspired by the developments in the theory of mean-field games with common noise, \cite{cardaliaguet2015master} and \cite{CarmonaDelarue2017book2}.

Take measurable $(b,\sigma^0, \sigma^1): [0,T] \times \cP_2(\bR^d) \to \bR^d \times \bR^{d \times m }\times \bR^{d\times m}$ and define the following process
\begin{align}
    \label{eq:GenericYprocess-PartialFlow22}
    \dd Y_t = b(t,\mu_t) \dd t + \sigma^0(t,\mu_t) \dd W^0_t + \sigma^1(t,\mu_t) \dd W^1_t, \text{ and initial condition } Y_0\in L^2(\Omega,\cF_0,\bP),
\end{align}
and $\mu_t := \Law(Y_t(\omega_0,\cdot))$ for $\bP^0$-almost any $\omega_0$.
Here $\Law(Y_t(\omega_0,\cdot))$ can be understood as RV from $(\Omega^0,\cF^0,\bP^0)$ into $\cP(\bR^d)$ (for further details see discussion in \cite{CarmonaDelarue2017book2}*{Section 4.3}).

The components of $Y = (Y_1,\dots, Y_d) \in \bR^d$, satisfy the following 1-dimensional SDE  
\begin{align}
    \label{eq:GenericYprocess-PartialFlow22-1d}
    \dd Y_{i,t} = b_i(t,\mu_t) \dd t + \sum_{j = 1}^m \sigma_{ij}^0 (t,\mu_t) \dd W^0_{j,t} +
    \sum_{j = 1}^m \sigma_{ij}^1(t,\mu_t) \dd W^1_{j,t}, \:\: Y_{0,i}\in L^2(\Omega,\cF_0,\bP), 
\end{align}
for all $i = 1,\dots, d$.

Moreover, the involved coefficients will be assumed to satisfy the next condition.
\begin{assumption}
\label{Assump:SDE-Y-mu-2BM}
Let $Y_0 \in L^2(\Omega,\cF_0,\bP)$ ($Y_0$ is $\cF_0$-measurable and independent of $W_t^0,~W_t^1, ~ t \in [0,T]$). Take $b: [0,T] \times \cP_2(\bR^d) \to \bR^d$ and $\sigma^0,~\sigma^1:[0,T] \times \cP_2(\bR^d)\to\bR^{d \times m}$ such that $(b_t)_{t \in [0,T]}, (\sigma^0_t)_{t \in [0,T]}$ and $(\sigma^1_t)_{t \in [0,T]}$ are $\bF$-progressively measurable processes and satisfy
\begin{enumerate}[i)]
    \item For any $t \in [0,T]$, the maps $\mu \mapsto \sigma^0(t,\mu),~ \mu \mapsto \sigma^1(t,\mu)$ are continuous in topology induced by the Wasserstein metric for any $\mu \in \cP_2(\bR^d)$;
    
    \item For any $t \in [0,T],~i = 1,\dots, d$ and $j = 1,\dots, m$, the map $\mu \mapsto \sigma^0_{ij}(t,\mu)$ continuously L-differentiable at every point $\mu \in \cP_2(\bR^d)$. Moreover, for all $i = 1,\dots, d$ and $j = 1,\dots, m,~\partial_\mu \sigma_{ij}^0: [0,T] \times \cP_2(\bR^d)$ is joint-continuous and locally bounded at every triple $(t,\mu,v)$, with $(t,\mu) \in [0,T] \times \cP_2(\bR^d),~ v \in \Supp(\mu)$;

    \item The involved coefficients satisfy for any compact $K \subset \cP_2(\bR^d)$
        \begin{align}
            \label{cond:integrability-measure-only}
            \sup_{\mu \in K}\sup_{i \in \{1,\dots,d \}} \int_0^t\Big[ |b_i(s,\mu)| + |\sigma_i^0(s,\mu)|^2 + |\sigma_i^1(s,\mu)|^2 + \big|\int_{\bR^d}\partial_\mu \sigma_i^0(s,\mu,y) \mu(\dd y) \big|^2 \Big] \dd s  < \infty.
        \end{align}
\end{enumerate}
\end{assumption}

We name $(W_t^0)_{t \in [0,T]}$ as a common noise affecting the whole setting, whilst $(W^1)_{t \in [0,T]}$ is the idiosyncratic chaos for the process $Y$.
For the purposes of the present section we fix the common noise and make all the transforms by conditioning on $W^0$. The measurability of involved measure-derivative terms is discussed in \cite{CarmonaDelarue2017book1}*{Remarks 5.101 and 5.103}. Throughout the text we exploit the procedure of conditioning on the common noise that is widely covered by \cite{CarmonaDelarue2017book2}*{Section 4.3}.

\subsection{Stratonovich-It\^o correspondence for measure-dependent integrands}

We are now ready to introduce the main result of the manuscript.

\begin{proposition}
\label{prop:strat-to-ito-main-result}

For almost all $\omega^0 \in \Omega^0$ take $(\mu_t)_{t \in [0,T]} := \big(Law(Y_t(\omega_0,\cdot))\big)_{t \in [0,T]},$ with $Y = (Y_1,\dots Y_d)$ solution to \eqref{eq:GenericYprocess-PartialFlow22} under Assumption \ref{Assump:SDE-Y-mu-2BM}.

Then for all $i = 1,\dots, d$ and $t \in [0,T]$ the Stratonovich SDE for $Y_i$ transforms $\bP$-a.s. to an It\^o SDE according to

\begin{align}
\label{eq:stratonovich-to-ito-main-result}
    \nonumber
    Y_{i,t} &= Y_{i,0} + \int_0^t b_i(s,\mu_s) ~ \dd s + \sum_{j = 1}^m \int_0^t \sigma_{ij}^0(s,\mu_s) ~\dd W^0_{j,s} + \sum_{j = 1}^m \int_0^t \sigma_{ij}^1(s,\mu_s) ~\dd W^1_{j,s} \\
    \Leftrightarrow \quad 
    Y_{i,t} &= Y_{i,0} + \int_0^t b_i(s,\mu_s) ~ \dd t + \sum_{j = 1}^m \int_0^t \sigma^0_{ij}(s,\mu_s) \circ \dd W^0_{j,s} + \sum_{j = 1}^m \int_0^t \sigma^1_{ij}(s,\mu_t) \circ \dd W^1_{j,s} \\
    \nonumber
    &\qquad - \frac12 \sum_{k=1}^{d}\sum_{j=1}^{m} \int_0^t \bE^1\Big[(\partial_\mu \sigma^0_{ij})_k(s,\mu_s,Y^1_s) \Big] \sigma^0_{kj}(s,\mu_s)~\dd s,
\end{align}
where $Y^1= Y^1(\omega_0,\cdot)$ is an independent copy process of $Y$ satisfying \eqref{eq:generic-particle-process-measure-only} for $\bP^0$-almost any $\omega^0 \in \Omega^0$.

\end{proposition}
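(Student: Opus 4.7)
The plan is to follow the \emph{limit of particle systems} methodology initiated in \cite{chassagneux2014classical,CarmonaDelarue2017book1} and employed in \cite{Platonov2019ito}: reduce the Lions-derivative correction to a classical finite-dimensional It\^o--Stratonovich conversion applied to an interacting particle approximation of \eqref{eq:GenericYprocess-PartialFlow22}, then pass to the limit $N\to\infty$ via conditional propagation of chaos. Concretely, on an extended probability space carrying $W^0$ and independent copies $(W^{1,l})_{l \geq 1}$ of $W^1$, I would introduce the $N$-particle system
\begin{align*}
\dd Y^{l,N}_t = b(t,\bar\mu^N_t)\,\dd t + \sigma^0(t,\bar\mu^N_t)\,\dd W^0_t + \sigma^1(t,\bar\mu^N_t)\,\dd W^{1,l}_t, \qquad \bar\mu^N_t = \frac{1}{N}\sum_{l=1}^N \delta_{Y^{l,N}_t},
\end{align*}
in which the common noise $W^0$ is shared across particles whereas the idiosyncratic noises are independent. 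Under Assumption \ref{Assump:SDE-Y-mu-2BM}, conditional propagation of chaos (cf.~\cite{CarmonaDelarue2017book2}) ensures that $W_2(\bar\mu^N_t,\mu_t)\to 0$ for $\bP^0$-a.e.\ $\omega^0$ and that any finite collection of particles converges, conditionally on $W^0$, to i.i.d.\ copies of $Y$; this is what gives content to the independent copy $Y^1$ appearing in \eqref{eq:stratonovich-to-ito-main-result}.

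At finite $N$ the process $(Y^{1,N},\ldots,Y^{N,N})$ is an ordinary SDE on $(\bR^d)^N$ with coefficients given by the empirical projections $b^N,\sigma^{0,N},\sigma^{1,N}$ in the sense of Definition \ref{def:Auxiliary-uN-for-EmpirialTrick}. The classical one-dimensional It\^o--Stratonovich conversion then applies componentwise, and the cross-variation splits into a contribution from the common noise and one from the idiosyncratic noises. Since the $(W^{1,l})_l$ are independent, only the diagonal term survives in the idiosyncratic cross-variation, so for fixed $l_0,i,j$,
\begin{align*}
\dd\bigl\langle \sigma^{0,N}_{ij},W^0_j\bigr\rangle_t = \sum_{l=1}^N\sum_{k=1}^d \partial_{y^l_k}\sigma^{0,N}_{ij}\,\sigma^0_{kj}(\bar\mu^N_t)\,\dd t, \qquad \dd\bigl\langle \sigma^{1,N}_{ij},W^{1,l_0}_j\bigr\rangle_t = \sum_{k=1}^d \partial_{y^{l_0}_k}\sigma^{1,N}_{ij}\,\sigma^1_{kj}(\bar\mu^N_t)\,\dd t.
\end{align*}
Proposition \ref{prop:DerivativeRelations-Space-2-Lions} rewrites $\partial_{y^l_k}\sigma^{0,N}_{ij} = \tfrac{1}{N}(\partial_\mu\sigma^0_{ij})_k(\bar\mu^N_t,Y^{l,N}_t)$, and similarly for $\sigma^{1,N}$. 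The common-noise correction thus becomes an empirical average against $\bar\mu^N_t$, while the idiosyncratic correction retains an explicit $\tfrac1N$ prefactor.

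Finally, I would pass to the limit $N\to\infty$. The idiosyncratic contribution vanishes thanks to its $\tfrac1N$ prefactor together with the local boundedness provided by \eqref{cond:integrability-measure-only} and a uniform moment bound for the particle system; this is the mean-field incarnation of the fact that $\mu_t$ is $\sigma(W^0)$-measurable and therefore has zero quadratic covariation with $W^1$. For the common noise, continuity of $\partial_\mu\sigma^0_{ij}$ in the product topology combined with $W_2(\bar\mu^N_t,\mu_t)\to 0$ yields
\begin{align*}
\frac{1}{N}\sum_{l=1}^N (\partial_\mu\sigma^0_{ij})_k(\bar\mu^N_t,Y^{l,N}_t)\,\sigma^0_{kj}(\bar\mu^N_t) \xrightarrow[N\to\infty]{} \bE^1\bigl[(\partial_\mu\sigma^0_{ij})_k(\mu_t,Y^1_t)\bigr]\,\sigma^0_{kj}(\mu_t),
\end{align*}
giving, after integrating in time, the advertised correction in \eqref{eq:stratonovich-to-ito-main-result}. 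The main obstacles are: (a) justifying convergence of \emph{Stratonovich} (not only It\^o) integrals, which requires upgrading pointwise convergence to $L^1$ convergence using \eqref{cond:integrability-measure-only}; (b) running all propagation-of-chaos arguments conditionally on $W^0$ and then globalising via Fubini; and (c) handling the regularity needed for the finite-$N$ It\^o--Stratonovich conversion for $\sigma^{1,N}$ (possibly via a standard mollification which is harmless in the limit precisely because its contribution is $O(1/N)$).
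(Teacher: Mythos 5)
Your overall architecture (particle approximation, classical It\^o--Stratonovich conversion at finite $N$, the derivative identity of Proposition \ref{prop:DerivativeRelations-Space-2-Lions}, passage to the limit) matches the paper's, but the specific particle system you choose creates gaps that the paper's construction is designed to avoid. The paper does \emph{not} use an interacting system with coefficients evaluated at $\bar\mu^N_t$; it uses exact, non-interacting copies $Y^l$ of $Y$ as in \eqref{eq:generic-particle-process-measure-only}, driven by the shared common noise $W^0$ and by independent idiosyncratic noises $W^{1,l}$, each coefficient evaluated at the copy's own conditional law $\mu^l_t$. These copies are conditionally i.i.d.\ given $W^0$, so $\bar\mu^N_t\to\mu_t$ follows from the conditional law of large numbers alone, and the conversion is applied to the \emph{original} integrals $\int\sigma^{0}(s,\bar\mu^N_s)\,\dd W^0_s$, $\int\sigma^{1}(s,\bar\mu^N_s)\,\dd W^1_s$ (approximated integrands, unchanged integrators $W^0,W^1$). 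Your route instead needs conditional propagation of chaos for the interacting system, plus well-posedness of that system and an identification of its limit with the given solution $Y$; none of this is available under Assumption \ref{Assump:SDE-Y-mu-2BM}, which only imposes continuity in $W_2$, local boundedness and the integrability condition \eqref{cond:integrability-measure-only} -- no Lipschitz-type structure. Moreover, since your finite-$N$ identity concerns integrals against the particles' noises $W^{1,l}$ rather than $W^1$, even granting chaos you only obtain convergence in (conditional) law, whereas the proposition asserts a $\bP$-a.s.\ identity for the original $Y$; transferring it back requires a uniqueness/coupling argument you have not supplied.

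The second gap is in the idiosyncratic bracket. In your interacting system $\big\langle\sigma^{1,N}_{ij}(\cdot,Y^{1,N},\dots,Y^{N,N}),W^{1,l_0}_j\big\rangle$ is genuinely nonzero, and writing it as a $\dd t$-integral requires $\partial_\mu\sigma^1$, which Assumption \ref{Assump:SDE-Y-mu-2BM} does not provide; your proposed fix (mollify $\sigma^1$ and discard an $O(1/N)$ term) needs a bound on the mollified Lions derivative that is uniform in $N$ \emph{and} in the mollification parameter, and such bounds degenerate as the mollification is removed, so the order of limits is not harmless as stated. The paper never faces this: because its copies are driven by $W^0$ and $W^{1,l}$ only, they are independent of $W^1$, so the bracket of $\sigma^{1,N}(\cdot,Y^1,\dots,Y^N)$ with $W^1$ in \eqref{eq:strat-ito-relation-after-applying-kunita} vanishes identically and no regularity of $\sigma^1$ beyond continuity is ever used. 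If you replace your interacting particles by the paper's conditionally i.i.d.\ copies, the propagation-of-chaos machinery, the identification step, and the $\partial_\mu\sigma^1$ issue all disappear, and the remaining limit passage (conditional expectation over the copy spaces, stochastic Fubini, dominated convergence with localisation) is exactly the paper's Step 2.
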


\begin{remark}
    Equation \eqref{eq:stratonovich-to-ito-main-result} provides a Stratonovich-to-It\^o stochastic integral correspondence in $1$-dimensional component of $Y$.
    Under relevant structural changes to the regularity involved in Assumption \ref{Assump:SDE-Y-mu-2BM} the general form of this rule in multidimensional case will be $\bP$-a.s. given by
    \begin{align}
        \nonumber
        Y_t &= Y_0 + \int_0^t b(s,\mu_s) ~ \dd s + \int_0^t \sigma^0(s,\mu_s) ~ \dd W^0_s + \int_0^t \sigma^1(s,\mu_s) ~ \dd W^1_s
        \\
        \label{eq:stratonovich-to-ito-main-result-tensor}
        \Leftrightarrow \quad Y_t &= Y_0 + \int_0^t b(s,\mu_s) ~ \dd s + \int_0^t \sigma^0(s,\mu_s) \circ \dd W^0_s + \int_0^t \sigma^1(s,\mu_s) \circ \dd W^1_s \\
        \nonumber
        &\qquad - \frac12 \int_0^t \bE^1\Big[\big(\partial_\mu (\sigma^0)_{i}^j\big)_{k}(s,\mu_s,Y^1_s) \Big] \otimes (\sigma^0)^{k}_{j}(s,\mu_s)~\dd s,
    \end{align}
where we use the common Einstein notation for summation across alternating indices.
\end{remark}

\begin{proof}[Proof of Proposition \ref{prop:strat-to-ito-main-result}]
    
    \emph{Step 1. Mollification and compactification}
    We carry out mollification in two steps - firstly we construct the mollifying sequence and later show its convergence.
    As in the \cite{Platonov2019ito}*{Theorem 3.4}, we pick a smooth function $\rho : \bR^d \to \bR^d$ with compact support, letting for any $t \in [0,T]$, $(\sigma^0\star \rho)_t(\mu) := \sigma^0_t(\mu \circ \rho^{-1})$ and for any $t \in [0,T]$ having $\sigma^0~\bP$-a.s. bounded and continuous at every pair $(t,\mu) \in \cP_2(\bR^d),~\partial_\mu \sigma^0 ~ \bP$-a.s. bounded and continuous at every triple $(t,\mu,v)$ for $v \in \Supp (\mu)$ and what follows from local boundedness of $\sigma^0,\sigma^1$ and $\partial_\mu \sigma^0$.
    Now picking the sequence $(\rho_n)_{n \geqslant 1}$ in a way that $(\rho_n, \partial_x\rho_n, \partial^2_{xx}\rho_n)(x) \to (x, I_d,0)$ as $n\to \infty$, we can conclude that $(\sigma^0\star \rho_n)_t(\mu),~(\sigma^1\star \rho_n)_t(\mu),~\partial_\mu (\sigma^0 \star \rho_n)_t(\mu,v)$ converge $\bP$-a.s. to $\sigma^0_t(\mu),~\sigma^1_t(\mu),~\partial_\mu \sigma^0_t (\mu,v)$ respectively. Thus we can assume $\sigma^0, \sigma^1$ and its $\partial_\mu \sigma^0$ $\bP$-a.s. bounded. 

    Again as in \cite{Platonov2019ito}*{Theorem 3.4} we consider $\mu \mapsto (\sigma^0 \star \rho)(\mu * \phi_G)$ instead of $\mu \mapsto (\sigma^0 \star \rho)(\mu)$ with $\phi_G$ - density of standard $d$-dimensional Gaussian distribution $N(0, I_d)$ on $\bR^d$ and $(\mu * \phi_G)(x):=\int_{\bR^d} \phi_G(x-y) d\mu(y)$. Now the support of $\mu *\phi_G$ is the whole $\bR^d$ and $\partial_\mu \sigma^0$ is $\bP$-a.s. continuous at every triple $(t,\mu \circ \phi_G, v), t \in [0,T], v \in \bR^d$. Installing $\phi_{\varepsilon,G}$ - $d$-dimensional Gaussian distribution $N(0,\varepsilon I_d)$ and letting $\varepsilon \searrow 0$, we conclude the $\bP$-a.s. convergence of $\partial_\mu \sigma^0_t (\mu*\phi_{\varepsilon,G},v)$ to $\partial_\mu \sigma^0_t (\mu,v)$ respectively.
    Thus we can assume $\bP$-a.s. uniform continuity of measure expansion terms for the whole $\bR^d$.
    
    Now we are to show that mollification procedure is well-posed. 
    It is straightforward to verify that $\sigma^0_n:= \sigma^0 \star \rho_n$ satisfies $\bP$-a.s. \eqref{cond:integrability-measure-only} uniformly in $n \geqslant 1$.
    Applying twice the dominated convergence theorem we conclude the $\bP$-a.s. convergence for all the $\dd t$ terms. To handle the convergence of the stochastic integrals one additionally requires an argument across the quadratic variation, as written in \cite{Platonov2019ito}*{Theorem 2.3} and localisation.
    We copy the procedure above to conclude that $\sigma^0,\sigma^1,\partial_\mu\sigma^1~\bP$-a.s. have compact support.

    \emph{Step 2. Approximation.} By our mollification argument one can assume the $\sigma^0,~ \sigma^1, \partial_\mu \sigma^0$ to be $\bP$-a.s. bounded and $\bP$-a.s. uniformly continuous in respective topology spaces. Furthermore, one can assume $Y_t$ to be $\bP$-a.s. bounded, as its distribution has compact support.
    We construct twin processes $(Y^l_t)_{t \in [0,T]}, ~ l = 1,\dots, N$ of $(Y_t)_{t \in [0,T]}$ each supporting its own independent $m$-dimensional Brownian motion $(W^{1,l}_t)_{t\in[0,T]}$ that generate $(\Omega^{1,l},\cF^{1,l}, \bF^{1,l},\bP^{1,l})$ alongside with $\cF_0^l$, altogether forming a copy of $(\Omega^{1},\cF^{1}, \bF^{1},\bP^{1})$. Since the stochastic basis $(\Omega,\cF,\bF,\bP)$ of our initial space is constructed as a completion of $(\Omega^0 \times \Omega^1, \cF^0 \otimes \cF^1, \bF^0 \otimes \bF^1, \bP^0 \otimes \bP^1)$ augmented in a right-continuous way and then completed, we introduce a new product basis $(\Omega^{l},\cF^{l}, \bF^{l},\bP^{l}) $ to be completion of $(\Omega^0 \times \Omega^{1,l}, \cF^0 \otimes \cF^{1,l}, \bF^0 \otimes \bF^{1,l}, \bP^0 \otimes \bP^{1,l})$ augmented in right-continuous way and then completed. Now we copy the dynamics of $(Y_t)_{t \in [0,T]}$, as 
    \begin{align}
    \label{eq:generic-particle-process-measure-only}
        \dd Y_t^l = b(t,\mu_t^l)~\dd t + \sigma(t,\mu_t^l) ~ \dd W_t^0 + \sigma(t,\mu_t^l) ~ \dd W^{1,l}_t, \quad Y^l_0 = Y_0^l,
    \end{align}
    where $Y_0^l$ are $\bP^0$ i.i.d. copies of $Y^0$.
The components of $Y^l= (Y^l_1,\dots, Y^l_d) \in \bR^d$ satisfy the following 1-dimensional SDE  
    \begin{align}
    \label{eq:generic-particle-process-measure-only-1d}
        \dd Y_{i,t}^l = b_i(t,\mu_t^l) ~ \dd t + \sum_{j = 1}^m \sigma_{ij}(t,\mu_t^l) ~ \dd W_{j,t}^0 + \sum_{j = 1}^m \sigma_{ij}(t,\mu_t^l) ~ \dd W^{1,l}_{j,t}, \quad Y^l_0 = Y_0^l,
    \end{align}
    for all $i \in 1,\dots,d$.
    Now we construct a total stochastic basis $(\Omega^{1,\dots,N},\cF^{1,\dots,N}, \bF^{1,\dots,N},\bP^{1,\dots,N})$, where
    \begin{align*}
        \Omega^{1,\dots, N} = \Omega^0 \times \Omega^1 \times \prod_{l=1}^N \Omega^{1,l}&, \quad  \cF^{1,\dots, N} = \cF^0 \otimes \cF^1 \otimes \bigotimes_{l=1}^N \cF^{1,l}, \\
        \bF^{1,\dots, N} = \bF^0 \otimes \bF^1 \otimes \bigotimes_{l=1}^N \bF^{1,l}&, \quad 
        \bP^{1,\dots, N} = \bP^0 \otimes  \bP^1 \otimes \bigotimes_{l=1}^N \bP^{1,l},
    \end{align*}
    where we again and finally augment the filtration in a right-continuous way and complete.
    We underline that processes $(Y^l_t)(\omega^0,\cdot),~ l = 1, \dots N$ are i.i.d. $\bP^0$-a.s.
    
    Hereinafter while fixing the $\omega^0\in \Omega^0$, and for the sake of simplicity we will omit adding the $(\omega^0,\cdot)$ to the processes $Y_t$ and its copies to highlight the respective relation to $\omega^0$, but will leave in after $\bar\mu^N_t$ as to underline the nature of this dependency.
    
    We denote the flow of marginals for almost all $\omega^0 \in \Omega^0$ as $\bar \mu_t^N(\omega^0,\cdot) := \frac1N\sum_{l = 1}^N \delta_{Y_t^l(\omega^0,\cdot)}$ for $t \in [0,T]$ and the empirical projection of $\sigma^0, \sigma^1$ as $\sigma^{0,N}, \sigma^{1,N}$. 

    We consider the components of the vector $Y$ given by \eqref{eq:GenericYprocess-PartialFlow22-1d} and the components of its copies \eqref{eq:generic-particle-process-measure-only} given by \eqref{eq:generic-particle-process-measure-only-1d}.

Now the standard Stratonovich-It\^o relation \cite{kunita1997stochastic}*{Theorem 2.3.5.} for stochastic integrals applied to $\sigma^{0,N}_{ij},~\sigma^{1,N}_{ij},~\bP$-a.s satisfies for all $i = 1,\dots, d$, 
    \begin{align}
        \nonumber
        & \sum_{j = 1}^m\int_0^t\sigma^{0,N}_{ij}(s,Y^1_s,\dots, Y^N_s) ~ \dd W^0_{j,s} +  \sum_{j = 1}^m\int_0^t\sigma^{1,N}_{ij}(s,Y^1_s,\dots, Y^N_s) ~ \dd W^1_{j,s} \\
        \label{eq:strat-ito-relation-after-applying-kunita}
        =  & \sum_{j = 1}^m\int_0^t \sigma^{0,N}_{ij}(s,Y^1_s,\dots, Y^N_s) \circ \dd W^0_{j,s} + \sum_{j = 1}^m\int_0^t \sigma^{1,N}_{ij}(s,Y^1_s,\dots, Y^N_s) \circ \dd W^1_{j,s}\\
        \nonumber
        &\qquad - \frac1{2}\sum_{j = 1}^m\int_0^t \dd \big\langle\sigma^{0,N}_{ij} (s,Y^1_s,\dots,Y^N_s), W_{j,s}^0\big\rangle
        - \frac1{2} \sum_{j = 1}^m\int_0^t \dd \big\langle\sigma^{1,N}_{ij} (s,Y^1_s,\dots,Y^N_s), W_{j,s}^1\big\rangle.
    \end{align}

    Using the conditional expectations one can show that mutual independence of the covariates we argue that the last term of \eqref{eq:strat-ito-relation-after-applying-kunita} is $0 ~\bP$-a.s..

    We transform further as in \cite{kloedenplaten1992NumericalSDEs}*{Section 4.9.} to have $\bP$-a.s. for all $i = 1,\dots, d$, 
    \begin{align}
        \nonumber
        & \sum_{j = 1}^m\int_0^t\sigma^{0,N}_{ij}(s,Y^1_s,\dots, Y^N_s) ~ \dd W^0_{j,s} +  \sum_{j = 1}^m\int_0^t\sigma^{1,N}_{ij}(s,Y^1_s,\dots, Y^N_s) ~ \dd W^1_{j,s} 
        \\
        \label{eq:strat-ito-relation-particles}
         & = \sum_{j = 1}^m\int_0^t \sigma^{0,N}_{ij}(s,Y^1_s,\dots, Y^N_s) \circ \dd W^0_{j,s} + \sum_{j = 1}^m \int_0^t \sigma^{1,N}_{ij}(s,Y^1_s,\dots, Y^N_s) \circ \dd W^1_{j,s} \\
        \nonumber
        &\qquad - \frac1{2} \sum_{j = 1}^m \sum_{l=1}^{N} \int_0^t \sum_{k = 1}^d  \partial_{y^l_k}\sigma^{0,N}_{ij}(s,Y^1_s,\dots,Y^N_s) ~\dd\big\langle Y_{k,s}^l,W_{j,s}^0\big\rangle.
    \end{align}

    Going back to $\sigma^0$ and $\sigma^1$, transforming the first bracket according to \eqref{eq:generic-particle-process-measure-only} since the dynamics of the copy processes is independent of $W^1$, we write that $\bP$-a.s. for all $i = 1,\dots, d$,
    \begin{align}
        \nonumber
        & \sum_{j = 1}^m \int_0^t\sigma_{ij}^{0}(\bar \mu_s^N(\omega^0,\cdot)) ~ \dd W^0_{j,s} + \sum_{j = 1}^m \int_0^t \sigma_{ij}^{1}(\bar \mu_s^N(\omega^0,\cdot)) ~ \dd W^1_{j,s} \\
        \label{eq:strat-ito-relation-empirical-measure}
        & =  \sum_{j = 1}^m \int_0^t\sigma_{ij}^{0}(\bar \mu_s^N(\omega^0,\cdot)) \circ \dd W^0_{j,s} + \sum_{j = 1}^m \int_0^t \sigma_{ij}^{1}(\bar \mu_s^N(\omega^0,\cdot)) \circ \dd W^1_{j,s} \\
        \nonumber
        &\qquad - \sum_{k=1}^d \sum_{j = 1}^m  \int_0^t\frac1{2N} \sum_{l=1}^{N}(\partial_\mu \sigma^0_{ij})_k(s,\bar \mu_s^N(\omega^0,\cdot), Y^l_s) ~\sigma^0_{kj}(s,\bar \mu_s^N(\omega^0,\cdot)) ~\dd s.
    \end{align}
    Taking conditional expectations on the above formula $\bE^{1,1,\dots, N}\big[ \cdot\big] :=\bE^{\bP^{1,\dots,N}} \big[ \cdot |\: \cF^0 \otimes \cF^1\big]$ we have $\bP$-a.s. by the stochastic Fubini theorem (see \cite{Veraar2010FubiniRevisited}*{Theorem 3.5}) for all $i = 1,\dots, d$,
    \begin{align}
        \nonumber
        &\sum_{j = 1}^m \int_0^t\bE^{1,1,\dots, N}\Big[\sigma_{ij}^{0}(s,\bar \mu_s^N(\omega^0,\cdot)) ~ \dd W^0_{j,s} \Big] + \sum_{j = 1}^m \int_0^t \bE^{1,1,\dots, N} \Big[\sigma_{ij}^{1}(s,\bar \mu_s^N(\omega^0,\cdot)) ~ \dd W^1_{j,s} \Big] 
        \\
        \label{eq:strat-ito-relation-empirical-measure-with-expectation}
        &=\sum_{j = 1}^m \int_0^t\bE^{1,1,\dots, N}\Big[\sigma_{ij}^{0}(s,\bar \mu_s^N(\omega^0,\cdot)) \circ \dd W^0_{j,s} \Big] +  \sum_{j = 1}^m \int_0^t\bE^{1,1,\dots, N}\Big[\sigma_{ij}^{1}(s,\bar \mu_s^N(\omega^0,\cdot)) \circ \dd W^1_{j,s} \Big] \\
        \nonumber
        & \qquad - \sum_{k=1}^d \sum_{j = 1}^m \int_0^t \frac1{2}\bE^{1,1,\dots, N} \Big[ (\partial_{\mu}\sigma_{ij}^{0})_k(s,\bar \mu_s^N(\omega^0,\cdot), Y^1_s) ~ \sigma_{kj}^{0}(\bar \mu_s^N(\omega^0,\cdot))\Big] \dd s.
    \end{align}
    Letting $n \to \infty$, \eqref{eq:strat-ito-relation-empirical-measure-with-expectation} converges to \eqref{eq:stratonovich-to-ito-main-result} $\bP$ a.s. by the dominated convergence theorem for stochastic integrals (applying localisation when necessary) and continuity of $\sigma^0, \sigma^1$ and $\partial_\mu\sigma^0$ (see \cite{Platonov2019ito}*{Theorem 2.3} for detailed arguments).
\end{proof}

We are able to establish a more general but less transparent result under weaker conditions.
\begin{proposition}
\label{prop:stratonovich-to-ito-general-result}
For almost all $\omega^0 \in \Omega^0$ take $(\mu_t)_{t \in [0,T]} := \big(Law(Y_t(\omega_0,\cdot))\big)_{t \in [0,T]},$ with $Y = (Y_1,\dots Y_d)$ solution to \eqref{eq:GenericYprocess-PartialFlow22} under Assumption \ref{Assump:SDE-Y-mu-2BM}, where we exclude all conditions on the existence and regularity of $\partial_\mu \sigma_{ij}^0$.

Then for all $i = 1,\dots d$ and $t \in [0,T]$ the Stratonovich SDE for $Y_i$ transforms $\bP$-a.s. to It\^o SDE according to

\begin{align}
\label{eq:stratonovich-to-ito-general-result}
    \nonumber
    Y_{i,t} &= Y_{i,0} + \int_0^t b_i(s,\mu_s) ~\dd s + \sum_{j = 1}^m \int_0^t \sigma_{ij}^0(s,\mu_s) ~\dd W^0_{j,s} + \sum_{j = 1}^m \int_0^t \sigma_{ij}^1(s,\mu_s) ~\dd W^1_{j,s} \\
    \Leftrightarrow \quad 
    \nonumber
    Y_{i,t} &= Y_{i,0} + \int_0^t b_i(s,\mu_s) ~\dd s + \sum_{j = 1}^m \int_0^t \sigma_{ij}^0(s,\mu_s) \circ \dd W^0_{j,s} + \sum_{j = 1}^m \int_0^t \sigma_{ij}^1(s,\mu_s) \circ \dd W^1_{j,s} \\
    & \qquad - \frac1{2} \sum_{j = 1}^m \big\langle\sigma^{0,N}_{ij} (t,\mu_t), W_{j,t}^0\big\rangle.
\end{align}

\end{proposition}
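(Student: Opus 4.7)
The plan is to follow the overall architecture of the proof of Proposition \ref{prop:strat-to-ito-main-result}, but to stop short of unpacking the cross-variation bracket via the Lions derivative, since that step is exactly what the weakened assumption forbids. In concrete terms, the target identity \eqref{eq:stratonovich-to-ito-general-result} is simply the \emph{raw} Stratonovich--It\^o correction written as a cross variation $\langle \sigma^{0}_{ij}(\cdot,\mu_\cdot), W^0_{j,\cdot}\rangle$ (we read the $\sigma^{0,N}$ appearing in the statement as a typo for $\sigma^0$), so our task reduces to proving that this bracket exists and equals the limit of the corresponding particle-system brackets.

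First, I would carry out Step 1 of the previous proof verbatim: convolve with $\rho_n$ in the measure argument and with $\phi_{\varepsilon,G}$ to get $\bP$-a.s.\ bounded and jointly continuous versions of $\sigma^0$ and $\sigma^1$, together with compactly supported laws for $Y_t$. I would \emph{not} mollify in a way that forces the existence of $\partial_\mu \sigma^0$; instead, I rely only on continuity of $\mu\mapsto \sigma^0(t,\mu),\sigma^1(t,\mu)$ in the Wasserstein topology, which is retained by the convolution.

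Next, I would set up the same particle-system construction and the total basis $(\Omega^{1,\dots,N},\cF^{1,\dots,N},\bF^{1,\dots,N},\bP^{1,\dots,N})$. The classical Stratonovich--It\^o conversion of \cite{kunita1997stochastic}*{Theorem 2.3.5} applied to $\sigma^{0,N}_{ij}$ and $\sigma^{1,N}_{ij}$ yields, $\bP$-a.s.,
\begin{align*}
\sum_{j=1}^m\!\int_0^t\!\sigma^{0,N}_{ij}\,\dd W^0_{j,s}
+ \sum_{j=1}^m\!\int_0^t\!\sigma^{1,N}_{ij}\,\dd W^1_{j,s}
=&\sum_{j=1}^m\!\int_0^t\!\sigma^{0,N}_{ij}\circ\dd W^0_{j,s}
+ \sum_{j=1}^m\!\int_0^t\!\sigma^{1,N}_{ij}\circ\dd W^1_{j,s}\\
&- \tfrac12\sum_{j=1}^m \big\langle \sigma^{0,N}_{ij}, W^0_{j,\cdot}\big\rangle_t
- \tfrac12\sum_{j=1}^m \big\langle \sigma^{1,N}_{ij}, W^1_{j,\cdot}\big\rangle_t,
\end{align*}
evaluated at $(s,Y^1_s,\dots,Y^N_s)$. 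The $W^1$-bracket vanishes by the same independence argument as before (the particle processes $Y^l$ depend on $W^{1,l}$ which is independent of $W^1$, and $\sigma^{1,N}$ is measure-valued in its particle argument), while the $\sigma^{0,N}$-bracket we keep as is rather than expanding it via Proposition \ref{prop:DerivativeRelations-Space-2-Lions}.

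Finally, I would let $N\to\infty$. The two stochastic integrals and the two Stratonovich integrals converge $\bP$-a.s. exactly as in the previous proof, by dominated convergence together with the It\^o-isometry localisation argument of \cite{Platonov2019ito}*{Theorem 2.3} and the continuity of $\sigma^0,\sigma^1$ in the Wasserstein variable; the convergence $\bar\mu^N_s(\omega^0,\cdot)\to\mu_s$ in $W_2$ for $\bP^0$-a.e.\ $\omega^0$ follows from the conditional propagation of chaos (independence of the $Y^l$ given $W^0$). The remaining cross-variation term $\tfrac12\sum_j\langle \sigma^{0,N}_{ij}(\cdot,Y^1_\cdot,\dots,Y^N_\cdot), W^0_{j,\cdot}\rangle_t$ is then \emph{defined} to be the limit of the particle brackets, and the identity is obtained by passing to the limit in the equality above. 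Once the other terms converge, the bracket term must converge too (by difference), yielding a well-defined object that we denote $\langle \sigma^0_{ij}(\cdot,\mu_\cdot), W^0_{j,\cdot}\rangle_t$.

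The main obstacle is the last point: without the $L$-differentiability of $\sigma^0$, we have no a priori formula or regularity for this limiting bracket, and the usual chain-rule route is unavailable. My proposal sidesteps this by defining the bracket as the $N\to\infty$ limit (justified because all other terms converge), but some care is needed to check that the limit does not depend on the mollification sequence $(\rho_n,\phi_{\varepsilon,G})$ used in Step 1; this is handled by a standard diagonal argument combining the uniform-in-$n$ bound \eqref{cond:integrability-measure-only} and the $\bP$-a.s.\ uniqueness of the left-hand side of \eqref{eq:stratonovich-to-ito-general-result}.
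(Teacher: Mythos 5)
Your architecture matches the paper's: the paper's proof simply re-runs the argument of Proposition \ref{prop:strat-to-ito-main-result} up to the particle-level conversion \eqref{eq:strat-ito-relation-after-applying-kunita}, kills the $W^1$-bracket by the same independence argument, and then passes to the limit $N\to\infty$ without ever expanding the $W^0$-bracket (and yes, the $\sigma^{0,N}$ in the statement is best read as $\sigma^0$). Where you diverge is the one step that carries the content of the proposition: the identification of the limiting correction term. The paper closes the argument by invoking the \emph{limit property of cross-variation} (\cite{kunita1981some}, Corollary 2.2.16): since $\sigma^0_{ij}(\cdot,\bar\mu^N_\cdot(\omega^0,\cdot))$ converges to $\sigma^0_{ij}(\cdot,\mu_\cdot)$, the particle brackets $\langle \sigma^0_{ij}(\cdot,\bar\mu^N_\cdot),W^0_{j,\cdot}\rangle$ converge to the genuine joint cross-variation $\langle \sigma^0_{ij}(\cdot,\mu_\cdot),W^0_{j,\cdot}\rangle$, which is the object appearing in \eqref{eq:stratonovich-to-ito-general-result}. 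You instead \emph{define} the limiting term as whatever is left over after the other terms converge. This is a genuine gap: the bracket in \eqref{eq:stratonovich-to-ito-general-result} is an intrinsically defined quantity (a joint quadratic covariation of the limit process with $W^0$, e.g.\ as a limit of sums of products of increments along partitions), not a label for a residual limit of particle-level brackets, so without identifying your limit with that intrinsic object you have proved a strictly weaker statement than the proposition asserts. Your closing worry about dependence on the mollification sequence is a symptom of the same problem, and a ``standard diagonal argument'' does not resolve it; the identification via the cross-variation limit theorem does, because it pins the limit down independently of the approximation.

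There is also a circularity in the order of your limiting argument. You claim the particle Stratonovich integrals converge ``exactly as in the previous proof'' and then extract the bracket by difference. But the Stratonovich integral $\int_0^t \sigma^0_{ij}(s,\mu_s)\circ \dd W^0_{j,s}$ of the limit process is only defined (in Kunita's framework, It\^o integral plus one half of the joint bracket, or equivalently via midpoint sums) once the existence of $\langle \sigma^0_{ij}(\cdot,\mu_\cdot),W^0_{j,\cdot}\rangle$ is known — which is exactly what you are trying to obtain by difference. The paper's ordering avoids this: first the It\^o integrals converge (dominated convergence for stochastic integrals with localisation), then the brackets converge to the bracket of the limit by \cite{kunita1981some}, and only then does convergence of the Stratonovich side follow from the conversion identity. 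Restructuring your last step along these lines — replacing ``define by difference'' with the cross-variation convergence result — would bring your proposal in line with the paper's proof.
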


\begin{proof}[Proof of Proposition \ref{prop:stratonovich-to-ito-general-result}]
We follow the same arguments as in a proof of \eqref{prop:strat-to-ito-main-result} up to establishing equation \eqref{eq:strat-ito-relation-after-applying-kunita}.
Again noticing that the last term is $\bP$-a.s. $0$, we pass to the limit as $N \to \infty$ in \eqref{eq:strat-ito-relation-after-applying-kunita} yo obtain \eqref{eq:stratonovich-to-ito-general-result} by dominated convergence theorem, continuity of $\sigma^0$ and $\sigma^1$ and the limit property of cross-variation (see \cite{kunita1981some}*{Corollary 2.2.16}).
\end{proof}

In case of $\sigma^0$ being a progressively-measurable random process, the last term of the Stratonovich form of \eqref{eq:stratonovich-to-ito-general-result} will capture not only the cross-variance between $\sigma^0(t,\mu_\cdot)$ and $W^0_\cdot$ but also that between $\sigma^0(\cdot,\mu_t)$ and $W^0_\cdot$. Nonetheless, without the differentiability assumption on $\sigma^0$ one cannot expand the term further and recover Proposition \ref{prop:strat-to-ito-main-result}.

\subsection{Stratonovich-It\^o correspondence for McKean-Vlasov SDEs}

Enriching the drift and diffusion functions by adding explicit dependence on the process itself, we write the equivalence between formulation under the framework of general McKean-Vlasov SDEs.

Take a measurable $(b,\sigma^0, \sigma^1): [0,T] \times \bR^d \times \cP_2(\bR^d) \to \bR^d \times \bR^{d \times m}\times \bR^{d \times m}$ and define the following process
\begin{align}
\label{eq:GenericYprocess-PartialFlow-with-Yprocess}
    \dd Y_t = b(t,Y_t,\mu_t) ~ \dd t + \sigma^0(t,Y_t,\mu_t) ~ \dd W^0_t + \sigma^1(t,Y_t,\mu_t) ~ \dd W^1_t, \quad Y_0\in L^2(\Omega,\cF_0,\bP),
\end{align}
and $\mu_t := \Law(Y_t(\omega_0,\cdot))$ for $\bP^0$-almost any $\omega_0$.
The components of $Y = (Y_1,\dots, Y_d) \in \bR^d$, satisfy the following 1-dimensional SDE  for all $i = 1,\dots, d$, 
\begin{align*}
    \dd Y_{i,t} = b_i(t,Y_t,\mu_t) ~ \dd t + \sum_{j = 1}^m \sigma_{ij}^0 (t,Y_t,\mu_t) ~ \dd W^0_{j,t} +
    \sum_{j = 1}^m \sigma_{ij}^1(t, Y_t,\mu_t) ~ \dd W^1_{j,t}, \:\: Y_{0,i}\in L^2(\Omega,\cF_0,\bP).
\end{align*}

Moreover, the involved coefficients satisfy the next condition.
\begin{assumption}
\label{Assump:SDE-Y-mu-2BM-full-process}
Let $Y_0 \in L^2(\Omega,\cF_0,\bP)$ ($Y_0$ is $\cF_0$-measurable and independent of $W_t^0,~W_t^1, ~ t \in [0,T]$). Take $b: [0,T] \times \bR^d \times \cP_2(\bR^d) \to \bR^d$ and $\sigma^0,~\sigma^1:[0,T] \times \bR^d \times \cP_2(\bR^d) \to \bR^{d \times m}$ such that it satisfies
\begin{enumerate}[i)]
    
    \item For any $(t,y) \in [0,T]\times \bR^d$, the maps $\mu \mapsto \sigma^0(t,y,\mu),~ \mu \mapsto \sigma^1(t,y,\mu)$ are continuous in topology induced by the Wasserstein metric for any $\mu \in \cP_2(\bR^d)$;

    \item For any $(t,y) \in [0,T]\times \bR^d,~i = 1,\dots, d$ and $j = 1,\dots, m$, the map $\mu \mapsto (\sigma^0)_{ij}(t,y,\mu)$ continuously L-differentiable $\bP$-a.s. at every point $\mu \in \cP_2(\bR^d)$. Moreover, for all $i = 1,\dots, d$ and $j = 1,\dots, m,~\partial_\mu \sigma_{ij}^0: [0,T] \times \cP_2(\bR^d)$ is $\bP$-a.s. joint-continuous and locally bounded at every quadruple $(t,y,\mu,v)$, with $(t,y,\mu) \in [0,T] \times \bR^d \times \cP_2(\bR^d),~ v \in \Supp(\mu)$;

    \item For any $(t,\mu) \in [0,T] \times \cP_2(\bR^d)$, the maps $y \mapsto \sigma^0(t,y,\mu),~y \mapsto \sigma^1(t,y,\mu)$ are $\cC^1(\bR^d),~\bP$-a.s. at every $y \in \bR^d$, with $\partial_y \sigma^0, \partial_{y} \sigma^1$ being $\bP$-a.s.~joint continuous at every triple $(t,y,\mu) \in [0,T] \times \bR^d \times \cP_2(\bR^d), ~\bP$-a.s.;
    \item The involved coefficients satisfy for any compact $K \subset \bR^d \times \cP_2(\bR^d)$
        \begin{align}
            \nonumber
            \label{cond:integrability-full-process}
            \sup_{(y,\mu) \in K} \sup_{i \in \{1,\dots,d \}} &\int_0^T \Big[|b_i(s,y,\mu)| + |\sigma_i^0(s,y,\mu)|^2 + |\sigma_i^1(s,y,\mu)|^2 
            + |\partial_y \sigma_i^0(s,y,\mu)|^2 \\
            &\qquad + |\partial_y \sigma_i^1(s,y,\mu)|^2 
            + \Big|\int_{\bR^d}\partial_\mu \sigma_i^0(s,y,\mu,v) \mu(\dd v)\Big|^2\Big]\dd s < \infty.
        \end{align}
\end{enumerate}
\end{assumption}

Following the arguments of Proposition \ref{prop:strat-to-ito-main-result} with the classical result of \cite{kunita1981some,kloedenplaten1992NumericalSDEs} one can prove the following result.

\begin{theorem}
    For almost all $\omega^0 \in \Omega^0$ take $(\mu_t)_{t \in [0,T]} := \big(Law(Y_t(\omega_0,\cdot))\big)_{t \in [0,T]},$ with $Y$ solution to \eqref{eq:GenericYprocess-PartialFlow-with-Yprocess} under Assumption \ref{Assump:SDE-Y-mu-2BM-full-process}.
    
    Then for all $i = 1,\dots, d$ and $t \in [0,T]$ the Stratonovich SDE for $Y_i$ transforms $\bP$-a.s. to It\^o SDE according to
    \begin{align}
        \nonumber
        Y_{i,t} &= Y_{i,0} + \int_0^t b_i(s,Y_s,\mu_s) ~\dd s + \sum_{j = 1}^m \int_0^t \sigma_{ij}^0(s,Y_s,\mu_s) ~ \dd W^0_{j,s} + \sum_{j = 1}^m \int_0^t \sigma_{ij}^1(s,Y_s,\mu_s) ~ \dd W^1_{j,s} \\
        \nonumber
        \Leftrightarrow \quad 
        Y_{i,t} &= Y_{i,0} + \int_0^t b_i(s,Y_s,\mu_s) ~\dd s + \sum_{j = 1}^m \int_0^t \sigma^0_{ij}(s,Y_s,\mu_s) \circ \dd W^0_{j,s} + \sum_{j = 1}^m \int_0^t \sigma^1_{ij}(s,Y_s,\mu_s) \circ \dd W^1_{j,s} \\
        \label{eq:stratonovich-to-ito-main-result-full-case}
        &\qquad - \frac12 \sum_{k=1}^{d}\sum_{j=1}^{m} \int_0^t \bE^1\Big[(\partial_\mu \sigma^0_{ij})_k(s,Y_s,\mu_s,Y^1_s) \Big] \sigma^0_{jk}(s,Y_s,\mu_s)~\dd s\\
        \nonumber
        &\qquad - \frac12 \sum_{k=1}^{d}\sum_{j=1}^{m} \int_0^t \partial_{y_k} \sigma_{ij}^0(s,Y_s,\mu_s)~\sigma_{jk}^0(s,Y_s,\mu_s)~\dd s \\
        \nonumber
        &\qquad - \frac12 \sum_{k=1}^{d}\sum_{j=1}^{m} \int_0^t \partial_{y_k} \sigma_{ij}^1(s,Y_s,\mu_s) ~ \sigma_{jk}^1(s,Y_s,\mu_s)~\dd s,
    \end{align}
    where $Y^1= Y^1(\omega_0,\cdot)$ is an independent copy process of $Y$ satisfying \eqref{eq:GenericYprocess-PartialFlow-with-Yprocess} for $\bP^0$-almost any $\omega^0 \in \Omega^0$.

\end{theorem}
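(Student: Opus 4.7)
The plan is to combine the ``limit of particle systems'' argument used for Proposition \ref{prop:strat-to-ito-main-result} with the classical Stratonovich-It\^o conversion for Kunita-type SDE coefficients that carry genuine spatial dependence \cite{kunita1981some,kloedenplaten1992NumericalSDEs}. The extra ingredient relative to Proposition \ref{prop:strat-to-ito-main-result} is that each coefficient now depends additionally on the current state $Y_s$, so an empirical projection will produce derivative contributions of \emph{two distinct origins}: the ordinary spatial derivative $\partial_y$ acting on the diagonal slot, and the Lions-type derivative $\partial_\mu$ recovered through the empirical-projection identification of Proposition \ref{prop:DerivativeRelations-Space-2-Lions}.

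\medskip

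\noindent First I would mollify and compactify the coefficients and the process exactly as in Step 1 of the proof of Proposition \ref{prop:strat-to-ito-main-result}, so that $\sigma^0,\sigma^1,\partial_y\sigma^0,\partial_y\sigma^1,\partial_\mu\sigma^0$ are $\bP$-a.s.\ bounded and uniformly continuous and $Y_t$ is $\bP$-a.s.\ bounded. Next, I would introduce the product basis $(\Omega^{1,\dots,N},\cF^{1,\dots,N},\bF^{1,\dots,N},\bP^{1,\dots,N})$ with i.i.d.\ copies $(Y^l)_{l=1,\dots,N}$ of $Y$ (each driven by an independent $W^{1,l}$ but sharing the common noise $W^0$) and define the \emph{two-argument} empirical projection
\[
\sigma^{0,N}_{ij}(s,y,y^1,\dots,y^N) \;:=\; \sigma^0_{ij}\!\Big(s,y,\tfrac{1}{N}\sum_{l=1}^N \delta_{y^l}\Big),
\]
and analogously $\sigma^{1,N}_{ij}$. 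Evaluating on the diagonal $y = Y_s$ paired with the particles $(Y^1_s,\dots,Y^N_s)$ and applying the classical Stratonovich-It\^o rule \cite{kunita1997stochastic}*{Theorem 2.3.5} to the resulting semimartingale integrands, the cross-variation with $W^0_j$ splits into three families of terms:
\begin{enumerate}[(a)]
\item $\partial_{y_k}\sigma^{0,N}_{ij}\,\dd\langle Y_{k,s},W^0_{j,s}\rangle$, which reproduces the classical spatial correction $\partial_{y_k}\sigma^0_{ij}\,\sigma^0_{kj}\,\dd s$;
\item $\sum_{l=1}^N\partial_{y^l_k}\sigma^{0,N}_{ij}\,\dd\langle Y^l_{k,s},W^0_{j,s}\rangle$, which by Proposition \ref{prop:DerivativeRelations-Space-2-Lions} equals $\tfrac{1}{N}\sum_l (\partial_\mu \sigma^0_{ij})_k(s,Y_s,\bar\mu^N_s,Y^l_s)\,\sigma^0_{kj}(s,Y^l_s,\bar\mu^N_s)\,\dd s$ and, after conditioning on the common noise and sending $N\to\infty$, produces the $\bE^1[\cdot]$ mean-field correction term in \eqref{eq:stratonovich-to-ito-main-result-full-case};
\item mixed time-derivative pieces that vanish by stochastic Fubini and the conditional independence that was already exploited at \eqref{eq:strat-ito-relation-after-applying-kunita}.
\end{enumerate}
For the $W^1_j$ integrand only the diagonal piece (a) survives, because $\dd\langle Y^l_{k,s},W^1_{j,s}\rangle=0$ for every $l\geqslant 1$ (the particles use independent $W^{1,l}$, while $\mu_s$ is conditional on $W^0$), which is precisely why no $\partial_\mu\sigma^1$ correction appears in \eqref{eq:stratonovich-to-ito-main-result-full-case}.

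\medskip

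\noindent To close, I would mirror the end of the proof of Proposition \ref{prop:strat-to-ito-main-result}: take conditional expectations $\bE^{\bP^{1,\dots,N}}[\,\cdot\mid\cF^0\otimes\cF^1\,]$ (justified by stochastic Fubini \cite{Veraar2010FubiniRevisited}*{Theorem 3.5}), send $N\to\infty$ using $\bP$-a.s.\ convergence $\bar\mu^N_s\to\mu_s$ together with uniform continuity of the mollified coefficients, and finally undo the mollification by dominated convergence with localisation where necessary. The \textbf{main technical obstacle} is the bookkeeping in item (b): one must carefully separate the Lions derivative reached through the particle slots from the ordinary spatial derivative reached through the diagonal slot, and check that the two contributions do not collide when the particle index coincides with the diagonal input $y=Y_s$. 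This is cleanly handled by treating the slot $y$ and the slots $y^l$ as formally independent arguments of $\sigma^{0,N}$ prior to any diagonal substitution, in the spirit of \cite{Platonov2019ito}*{Theorem 3.4}, so that the extra term $\partial_{y_k}\sigma^0_{ij}\,\sigma^0_{kj}$ and the mean-field term $\bE^1[(\partial_\mu\sigma^0_{ij})_k]\,\sigma^0_{kj}$ appear as two genuinely independent corrections.
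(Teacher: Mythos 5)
Your strategy is exactly the one the paper intends: the paper does not write out a proof of this theorem at all, but only says it follows from the arguments of Proposition \ref{prop:strat-to-ito-main-result} combined with the classical state-dependent conversion rule of Kunita/Kloeden--Platen, and your plan (mollification, the two-argument empirical projection, splitting the bracket into the diagonal slot and the particle slots via Proposition \ref{prop:DerivativeRelations-Space-2-Lions}, conditioning on $\cF^0\otimes\cF^1$, then $N\to\infty$ and de-mollification) is precisely that programme; your observation that the $W^1$-integrand only contributes the diagonal correction because the particles carry independent copies $W^{1,l}$ is also the right reason why no $\partial_\mu\sigma^1$ term appears.

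One point in item (b) needs care, and as written it is glossed over. Your pre-limit expression is correct: the particle-slot bracket gives $\tfrac1N\sum_{l}(\partial_\mu\sigma^0_{ij})_k(s,Y_s,\bar\mu^N_s,Y^l_s)\,\sigma^0_{kj}(s,Y^l_s,\bar\mu^N_s)$, where the second factor is evaluated \emph{along the particle} $Y^l_s$ because $\dd\langle Y^l_{k},W^0_{j}\rangle=\sigma^0_{kj}(s,Y^l_s,\bar\mu^N_s)\,\dd s$. But then this empirical average converges (after conditioning) to the expectation of the \emph{product}, $\bE^1\big[(\partial_\mu\sigma^0_{ij})_k(s,Y_s,\mu_s,Y^1_s)\,\sigma^0_{kj}(s,Y^1_s,\mu_s)\big]$, and not to the factored quantity $\bE^1\big[(\partial_\mu\sigma^0_{ij})_k(s,Y_s,\mu_s,Y^1_s)\big]\,\sigma^0_{jk}(s,Y_s,\mu_s)$ appearing in display \eqref{eq:stratonovich-to-ito-main-result-full-case}: once $\sigma^0$ depends on the state variable, the factor $\sigma^0_{kj}$ cannot be pulled out of the empirical mean, unlike in Proposition \ref{prop:strat-to-ito-main-result} where it is state-independent. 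So you should not claim to recover the displayed term verbatim; either state the correction with $\sigma^0_{kj}(s,Y^1_s,\mu_s)$ inside $\bE^1$ (which is what your computation actually yields, and which also fixes the index order $kj$ versus $jk$), or explicitly flag that the theorem's display is to be read in that sense. With that amendment the argument goes through as you outline it.
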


%
%
\begin{bibdiv}
\begin{biblist}

\bib{buckdahn2017mean}{article}{
      author={Buckdahn, Rainer},
      author={Li, Juan},
      author={Peng, Shige},
      author={Rainer, Catherine},
       title={Mean-field stochastic differential equations and associated
  {PDE}s},
        date={2017},
        ISSN={0091-1798},
     journal={Ann. Probab.},
      volume={45},
      number={2},
       pages={824\ndash 878},
         url={https://doi.org/10.1214/15-AOP1076},
      review={\MR{3630288}},
}

\bib{cardaliaguet2010notes}{techreport}{
      author={Cardaliaguet, Pierre},
       title={Notes on mean field games},
 institution={Technical report},
        date={2010},
}

\bib{cardaliaguet2015master}{book}{
      author={Cardaliaguet, Pierre},
      author={Delarue, Fran{\c{c}}ois},
      author={Lasry, Jean-Michel},
      author={Lions, Pierre-Louis},
       title={The master equation and the convergence problem in mean field
  games:(ams-201)},
   publisher={Princeton University Press},
        date={2019},
      volume={381},
}

\bib{CarmonaDelarue2017book1}{book}{
      author={Carmona, Rene},
      author={Delarue, Francois},
       title={Probabilistic theory of mean field games with applications {I}},
     edition={1},
      series={Probability Theory and Stochastic Modelling},
   publisher={Springer International Publishing},
        date={2017},
      volume={84},
}

\bib{CarmonaDelarue2017book2}{book}{
      author={Carmona, Rene},
      author={Delarue, Francois},
       title={Probabilistic theory of mean field games with applications {II}},
     edition={1},
      series={Probability Theory and Stochastic Modelling},
   publisher={Springer International Publishing},
        date={2017},
      volume={84},
}

\bib{cavallazzi2021krylov}{article}{
      author={Cavallazzi, Thomas},
       title={It{\^o}-{K}rylov's formula for a flow of measures},
        date={2021},
     journal={arXiv preprint arXiv:2110.05251},
}

\bib{chassagneux2014classical}{article}{
      author={Chassagneux, Jean-Fran{\c{c}}ois},
      author={Crisan, Dan},
      author={Delarue, Fran{\c{c}}ois},
       title={A probabilistic approach to classical solutions of the master
  equation for large population equilibria},
        date={2014},
     journal={arXiv preprint arXiv:1411.3009},
}

\bib{delarue2021probabilistic}{article}{
      author={Delarue, Francois},
      author={Salkeld, William},
       title={Probabilistic rough paths {I} {L}ions trees and coupled {H}opf
  algebras},
        date={2021},
     journal={arXiv preprint arXiv:2106.09801},
}

\bib{Platonov2019ito}{article}{
      author={dos Reis, Gon{\c c}alo},
      author={Platonov, Vadim},
       title={It{\^o}-{W}entzell-{L}ions formula for measure dependent random
  fields under full and conditional measure flows},
        date={2019},
     journal={arXiv preprint arXiv:1910.01892},
}

\bib{Erny2021ConditoinalPoC}{article}{
      author={Erny, Xavier},
      author={L\"{o}cherbach, Eva},
      author={Loukianova, Dasha},
       title={Conditional propagation of chaos for mean field systems of
  interacting neurons},
        date={2021},
     journal={Electron. J. Probab.},
      volume={26},
       pages={Paper No. 20, 25},
         url={https://doi.org/10.1214/21-EJP580},
      review={\MR{4235471}},
}

\bib{Foster2020polynomialapproximation}{article}{
      author={Foster, James},
      author={Lyons, Terry},
      author={Oberhauser, Harald},
       title={An optimal polynomial approximation of {B}rownian motion},
        date={2020},
        ISSN={0036-1429},
     journal={SIAM J. Numer. Anal.},
      volume={58},
      number={3},
       pages={1393\ndash 1421},
         url={https://doi.org/10.1137/19M1261912},
      review={\MR{4093364}},
}

\bib{guo2020s}{article}{
      author={Guo, Xin},
      author={Pham, Huy{\^e}n},
      author={Wei, Xiaoli},
       title={It{\^o}'s formula for flow of measures on semimartingales},
        date={2020},
     journal={arXiv preprint arXiv:2010.05288},
}

\bib{holm2020itovsStrat}{article}{
      author={Holm, Darryl~D.},
       title={Stochastic modelling in fluid dynamics: {I}t\^{o} versus
  {S}tratonovich},
        date={2020},
        ISSN={1364-5021},
     journal={Proc. A.},
      volume={476},
      number={2237},
       pages={20190812, 12},
         url={https://doi.org/10.1098/rspa.2019.0812},
      review={\MR{4111962}},
}

\bib{KaratzasShreve1991BMandStochCalc}{book}{
      author={Karatzas, Ioannis},
      author={Shreve, Steven~E.},
       title={Brownian motion and stochastic calculus},
     edition={Second},
      series={Graduate Texts in Mathematics},
   publisher={Springer-Verlag, New York},
        date={1991},
      volume={113},
        ISBN={0-387-97655-8},
         url={https://doi.org/10.1007/978-1-4612-0949-2},
      review={\MR{1121940}},
}

\bib{kloedenplaten1992NumericalSDEs}{book}{
      author={Kloeden, Peter~E.},
      author={Platen, Eckhard},
       title={Numerical solution of stochastic differential equations},
      series={Applications of Mathematics (New York)},
   publisher={Springer-Verlag, Berlin},
        date={1992},
      volume={23},
        ISBN={3-540-54062-8},
         url={https://doi.org/10.1007/978-3-662-12616-5},
      review={\MR{1214374}},
}

\bib{kunita1981some}{incollection}{
      author={Kunita, Hiroshi},
       title={Some extensions of {I}t\^{o}'s formula},
        date={1981},
   booktitle={Seminar on {P}robability, {XV} ({U}niv. {S}trasbourg,
  {S}trasbourg, 1979/1980) ({F}rench)},
      series={Lecture Notes in Math.},
      volume={850},
   publisher={Springer, Berlin},
       pages={118\ndash 141},
      review={\MR{622557}},
}

\bib{kunita1997stochastic}{book}{
      author={Kunita, Hiroshi},
       title={Stochastic flows and stochastic differential equations},
      series={Cambridge Studies in Advanced Mathematics},
   publisher={Cambridge University Press, Cambridge},
        date={1997},
      volume={24},
        ISBN={0-521-35050-6; 0-521-59925-3},
        note={Reprint of the 1990 original},
      review={\MR{1472487}},
}

\bib{Kupferman2004ItoStrat}{article}{
      author={Kupferman, R.},
      author={Pavliotis, G.~A.},
      author={Stuart, A.~M.},
       title={It\^{o} versus {S}tratonovich white-noise limits for systems with
  inertia and colored multiplicative noise},
        date={2004},
        ISSN={1539-3755},
     journal={Phys. Rev. E (3)},
      volume={70},
      number={3},
       pages={036120, 9},
         url={https://doi.org/10.1103/PhysRevE.70.036120},
      review={\MR{2130323}},
}

\bib{lions2007cours}{article}{
      author={Lions, Pierre-Louis},
       title={Cours au {C}ollege de {F}rance: : Th{\'e}orie des jeux a champs
  moyens},
        date={2007},
     journal={Available at www.college-de-france.fr},
}

\bib{Maurelli2020Nonexplosion}{article}{
      author={Maurelli, Mario},
       title={Non-explosion by {S}tratonovich noise for {ODE}s},
        date={2020},
     journal={Electron. Commun. Probab.},
      volume={25},
       pages={Paper No. 68, 10},
         url={https://doi.org/10.3390/mca25010009},
      review={\MR{4158228}},
}

\bib{talbi2021dynamic}{article}{
      author={Talbi, Mehdi},
      author={Touzi, Nizar},
      author={Zhang, Jianfeng},
       title={Dynamic programming equation for the mean field optimal stopping
  problem},
        date={2021},
     journal={arXiv preprint arXiv:2103.05736},
}

\bib{vankampen1981itovsStrat}{article}{
      author={van Kampen, N.~G.},
       title={It\^{o} versus {S}tratonovich},
        date={1981},
        ISSN={0022-4715},
     journal={J. Statist. Phys.},
      volume={24},
      number={1},
       pages={175\ndash 187},
         url={https://doi.org/10.1007/BF01007642},
      review={\MR{601694}},
}

\bib{Veraar2010FubiniRevisited}{article}{
      author={Veraar, Mark},
       title={The stochastic {F}ubini theorem revisited},
        date={2012},
        ISSN={1744-2508},
     journal={Stochastics},
      volume={84},
      number={4},
       pages={543\ndash 551},
         url={https://doi.org/10.1080/17442508.2011.618883},
      review={\MR{2966093}},
}

\end{biblist}
\end{bibdiv}


\end{document}